 \newcommand{\hm}[1]{\leavevmode{\marginpar{\tiny%
 $ \hbox to 0mm{\hspace*{-0.5mm} $ \leftarrow $ \hss}%
 \vcenter{\vrule depth 0.1mm height 0.1mm width \the\marginparwidth}%
 \hbox to
 0mm{\hss $ \rightarrow $ \hspace*{-0.5mm}} $ \\\relax\raggedright #1}}}
\newcommand{\euler}{\mathrm{e}}
\newcommand{\drm}{\mathrm{d}}
\newcommand{\dvol}{\mathrm{dvol}}
\newcommand{\RR}{\mathbb{R}}
\newcommand{\NN}{\mathbb{N}}
\newcommand{\tvert}[1]{{\left\vert\kern-0.25ex\left\vert\kern-0.25ex\left\vert #1 
    \right\vert\kern-0.25ex\right\vert\kern-0.25ex\right\vert}}
\renewcommand{\epsilon}{\varepsilon}
\DeclareMathOperator{\diam}{\mathop{diam}}
\DeclareMathOperator{\Tr}{\mathop{Tr}}
\DeclareMathOperator{\Ric}{\mathop{Ric}}
\DeclareMathOperator{\Vol}{\mathop{Vol}}
\newtheorem{theorem}{Theorem}[section]
\newtheorem{proposition}[theorem]{Proposition}
\newtheorem{corollary}[theorem]{Corollary}
\theoremstyle{definition}
\theoremstyle{remark}
	\definecolor{darkred}{rgb}{0.5,0,0}
	\definecolor{darkgreen}{rgb}{0,0.5,0}
	\definecolor{darkblue}{rgb}{0,0,0.5}
\begin{document}
\title{Li-{Y}au gradient estimate for compact manifolds with negative part of Ricci curvature in the Kato class}
\author{Christian Rose}
%
%
\affil{Technische Universit\"at Chemnitz, Faculty of Mathematics, D - 09107 Chemnitz}
\date{\today}
\maketitle
\section{Introduction}
Heat kernel estimates are of particular interest in geometric analysis. Explicit bounds lead to estimates of certain geometric invariants, such as eigenvalue bounds for the Laplace-Beltrami operator or Betti numbers on compact manifolds. The dependence of heat kernel bounds on the underlying geometry is crucial. \cite{LiYau-86} shows that it is possible to derive such estimates using gradient estimates for solutions of the heat equation. Several authors generalized the assumption of a sharp lower Ricci curvature bound like in \cite{LiYau-86} and derived explicit heat kernel estimates. Gallot obtained in \cite{Gallot-88} the first bounds depending on global $L^p$-Ricci curvature estimates and derived also eigenvalue and Betti number bounds. In \cite{Rose-16} the author proves a heat kernel estimate for locally uniformly Ricci curvature $L^p$-bounds and shows that under these conditions, the first Betti number of a compact manifold can also be bounded explicitly. \\
From the analytic point of view, $L^p$-conditions on the negative part of the Ricci curvature operator are a good, but not the optimal curvature condition for which one can expect bounds on analytic, and in turn topological, objects on a manifold. The natural generalization comes with the Kato condition from potential theory. A measurable non-negative function $V\colon M\to\RR$ is said to be in the Kato class $\mathcal{K}(M)$, if there is a $\beta>0$ such that 
\[
b_{\rm{Kato}}(\beta, V):=\int_0^\beta \Vert P_tV\Vert_\infty\drm t<1\text,
\]
where $(P_t)_{t\geq 0}$ denotes the heat semigroup associated to the Laplace-Beltrami operator $\Delta\geq 0$ acting on functions on $M$. If the negative part of the Ricci curvature $\rho_-$ satisfies a $L^p$-bound which is small enough, then the heat kernel can be controlled and therefore also $b_{\rm{Kato}}(\beta, \rho_-)$. This article shows that it the assumption that the negative part of the Ricci curvature is in the Kato class implies a gradient estimate for solutions of the heat equation. This shows that a lot of geometric invariants can be controlled intrinsically without the exact knowledge about the behavior of the heat kernel.
The obtained Harnack inequality gives a heat kernel upper bound using the standard argument in \cite{LiYau-86}, and following the arguments in  \cite{RoseStollmann-15} our results imply purely analytic bounds for the first Betti number in terms of the Kato condition.\\
\emph{I want to thank my PhD advisor Peter Stollmann for fruitful discussions about the topic and Alexander Grigor'yan for raising the question at the conference \grqq{}Heat kernels and Analysis on Manifolds and Fractals\grqq{} in Bielefeld. Furthermore I want to thank Gilles Carron for pointing at the paper \cite{ZhangZhu-15}.
}
\section{Gradient estimates under Ricci curvature Kato condition}
Consider a closed Riemannian manifold $M$ of dimension $n\in\NN$. Its Ricci curvature tensor $\Ric\colon T^*M\to T^*M$ is a continuous function on $M$ and maps as an endomorphism from the tangent space at any point into itself and is pointwise a symmetric matrix. The function $\rho\colon M\to\RR$ maps any point $x\in M$ to the smallest eigenvalue of $\Ric(x)$. \\
For any $t\geq 0$, the operator $P_t\colon L^2(M)\to L^2(M)$ is given by $P_t=\euler^{-t\Delta}$. Our first result is a variant of the famous gradient bounds of \cite{LiYau-86} based on the strategy of \cite{ZhangZhu-15}.
\begin{theorem}\label{gradientbound}
Let $M$ be a compact Riemannian manifold of dimension $n\in\NN$, $n\geq 2$ and let $u$ be a positive solution of the heat equation
\begin{align}\label{heatequation}
\partial_tu=-\Delta u\text.
\end{align}
For any $\alpha\in(0,1)$ 
set $\delta:=\frac{2(1-\alpha)^2}{n+(1-\alpha)^2}$. 
Assume that there exists a 
$\beta>0$ such that $$b:=b_{\rm{Kato}}(\beta,\rho_-)<\frac{\delta}{5-\delta}\text.$$
Then we have
\begin{align}\label{gradientestimate}
\alpha\, j(t)\frac{\vert \nabla u\vert^2}{u^2}-\frac{\partial_t u}{u}\leq \frac{n}{(2-\delta)\alpha j(t)}\frac 1t\text,
\end{align}
	for all $t\in (0,\infty)$, where 
	\begin{align}\label{boundsonjexplicit}
	j(t)=
	(1-b)^{\left(1+\frac t\beta\right)\frac{\delta}{5-\delta}}\text.
	\end{align}
\end{theorem}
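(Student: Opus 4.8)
The plan is to run the Li--Yau argument in the logarithmic variable $f:=\log u$ and to replace the missing pointwise lower Ricci bound by an averaged one extracted from the heat semigroup and the Kato hypothesis, in the spirit of \cite{ZhangZhu-15}. First I would record that, by \eqref{heatequation}, $f$ solves $\partial_tf+\Delta f=|\nabla f|^2$, so that the left-hand side of \eqref{gradientestimate} is exactly $w:=\alpha j(t)\,|\nabla f|^2-\partial_tf$ and $\Delta f=w+(1-\alpha j(t))|\nabla f|^2$. Setting $F:=t\,w$, the assertion \eqref{gradientestimate} is equivalent to the uniform bound $F\le \frac{n}{(2-\delta)\alpha j(t)}$ on $M\times(0,\infty)$, which I would establish for each finite time horizon $T$ and then let $T\to\infty$.

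Second, applying the Bochner formula to $|\nabla f|^2$, using $|\nabla^2 f|^2\ge\frac1n(\Delta f)^2$ and $\Ric(\nabla f,\nabla f)\ge-\rho_-|\nabla f|^2$, a direct computation gives, with the heat operator $\partial_t+\Delta$,
\[
(\partial_t+\Delta)F\le 2\sprod{\nabla f}{\nabla F}+w-\frac{2t\,\alpha j}{n}(\Delta f)^2+t\,\alpha j'\,|\nabla f|^2+2t\,\alpha j\,\rho_-\,|\nabla f|^2 .
\]
The Hessian term is the source of the gain: combined with a refined Cauchy--Schwarz estimate separating the $\nabla f$-direction and with the negative gradient term $t\alpha j'|\nabla f|^2$ (note $j'<0$), optimizing the free parameter replaces the coefficient $2$ by $2-\delta$, with $\delta=\frac{2(1-\alpha)^2}{n+(1-\alpha)^2}$ the optimal choice; an elementary optimization based on $\Delta f=w+(1-\alpha j)|\nabla f|^2$ then produces the target constant $\frac{n}{(2-\delta)\alpha j}$.

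The main obstacle is the curvature term $2t\,\alpha j\,\rho_-|\nabla f|^2$: since $\rho_-$ lies only in the Kato class and may be pointwise large, a naive space--time maximum principle fails, because one cannot arrange $\alpha j'+2\alpha j\rho_-\le 0$ pointwise. I would therefore integrate the differential inequality against the heat kernel (Duhamel's formula for $\partial_t+\Delta$), so that the offending term is replaced by $\int_0^T\!\!\int_M p(x,y,T-s)\,2s\,\alpha j(s)\rho_-(y)|\nabla f|^2\,\drm y\,\drm s$ and is controlled by $\int_0^T\Vert P_{T-s}\rho_-\Vert_\infty\,\drm s$ times the supremum of the gradient quantity. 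Solving the resulting self-referential estimate for $\sup F$ is possible precisely because the weight $j$ in \eqref{boundsonjexplicit} is chosen as the integrating factor whose logarithmic derivative matches the Kato density: iterating the Duhamel estimate over successive time windows of length $\beta$ contributes one factor $(1-b)^{\delta/(5-\delta)}$ per window, which accumulates to $j(t)=(1-b)^{\left(1+\frac t\beta\right)\frac{\delta}{5-\delta}}$, and the smallness assumption $b<\frac{\delta}{5-\delta}$ is exactly what makes the fixed point solvable and keeps $j$ strictly positive. This Kato-to-semigroup passage, together with making the bootstrap converge uniformly in $T$, is the step I expect to be delicate; the Bochner computation and the final optimization are then routine.
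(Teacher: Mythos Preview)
Your Bochner computation and the role of $\delta$ are on the right track, but the strategy for handling the curvature term misses the paper's key device and, as written, does not close. The paper does \emph{not} work with the time-only weight $j(t)$ inside the maximum-principle argument; it introduces a space--time dependent auxiliary function $J(x,t)$ solving
\[
-\Delta J-2\rho_-J -5\delta^{-1}\frac{|\nabla J|^{2}}{J}-\partial_tJ=0,\qquad J(\cdot,0)=1,
\]
and inserts \emph{this} $J$ into the quantity $Q=\alpha J\,|\nabla f|^{2}-\partial_t f$. The PDE is tailored so that in the differential inequality for $tQ$ the bracketed coefficient of $t|\nabla f|^{2}$ (namely $-\Delta J-2\rho_-J-5\delta^{-1}|\nabla J|^{2}/J-\partial_tJ$) vanishes identically; the ordinary maximum principle then applies with no curvature residue. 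The Kato hypothesis enters only afterwards, to bound $J$: the substitution $w=J^{-(5\delta^{-1}-1)}$ linearizes the equation to a Schr\"odinger heat equation, so $w=\euler^{-t(\Delta-2(5\delta^{-1}-1)\rho_-)}1$, and the Kato bound $b<\delta/(5-\delta)$ yields $1\le w\le (1-b)^{-(1+t/\beta)}$, i.e.\ $j(t)\le J(x,t)\le 1$. Thus $j(t)$ is the \emph{a posteriori} lower bound on $J$, not a weight chosen in advance.

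Your alternative---keeping only $j(t)$ and absorbing $\rho_-$ by Duhamel plus a bootstrap over $\beta$-windows---has a structural gap: the differential inequality for $F=tw$ is genuinely nonlinear in $F$ through the $(\Delta f)^{2}$ term (which, after writing $\Delta f=w+(1-\alpha j)|\nabla f|^{2}$, contains $w^{2}$), so integrating against the heat kernel does not produce a closed linear inequality for $\sup F$; the ``self-referential estimate'' you invoke would have a quadratic right-hand side, and the fixed-point step you sketch has no clear reason to converge. Moreover $j'(t)/j(t)$ is a constant, so it cannot serve as an integrating factor that ``matches the Kato density'' $\rho_-(x)$ pointwise---which is exactly why a spatially varying $J$ is needed to make the maximum principle go through.
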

The above result can be proven following the proof of Theorem 1.1 in \cite{ZhangZhu-15}. The important step consists of controlling the solution of the problem below.
\begin{proposition}\label{controlfunction}
Let $M$ be a closed Riemannian manifold of dimension $n\in\NN$, $n\geq 2$, $\delta<5$ and consider the problem
\begin{align}\label{differentialeq}
\begin{cases}
-\Delta J-2\rho_-J -5\delta^{-1}\frac{\vert\nabla J\vert^2}{J}-\partial_tJ=0&\text{on $M\times(0,\infty)$,}\\
J(\cdot,0)=1\text.&
\end{cases}
\end{align}
Assume that $b:=b_{\rm{Kato}}(\beta,\rho_-)<(5\delta^{-1}-1)^{-1}$. Then \eqref{differentialeq} has a smooth unique solution satisfying
\begin{align}\label{boundsonJ}
j(t)\leq J(x,t)\leq 1\text,
\end{align}
for all $t\geq 0$ and $x\in M$, where $j(t)$ is given by \eqref{boundsonjexplicit}.
\end{proposition}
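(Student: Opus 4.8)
The plan is to linearise the problem by a Hopf--Cole-type power substitution and then read off everything from a linear parabolic equation with a nonnegative potential of Kato type. Set $p:=5\delta^{-1}-1=\frac{5-\delta}{\delta}$, which is positive because $\delta<5$, and put $u:=J^{-p}$, equivalently $J=u^{-1/p}$. The exponent $-p$ is chosen precisely so that the nonlinear term $5\delta^{-1}|\nabla J|^2/J$ is absorbed: substituting $J=u^{-1/p}$ into \eqref{differentialeq} and dividing by the common factor, the coefficient of $|\nabla u|^2/u$ is $(\gamma-1)-5\delta^{-1}\gamma$ with $\gamma=-1/p$, which vanishes by the very definition of $p$, and one is left with the linear equation
\[
\partial_t u=-\Delta u+2p\,\rho_-\,u,\qquad u(\cdot,0)=1 .
\]
Thus the whole proposition becomes equivalent to solving this linear Schr\"odinger-type heat equation with the continuous, nonnegative potential $W:=2p\,\rho_-$ and transporting the bounds \eqref{boundsonJ} back through $J=u^{-1/p}$.

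For existence, uniqueness and regularity I would argue entirely on the linear level. Since $M$ is compact and $\rho_-$ is bounded (the Ricci tensor is continuous), the equation above has a unique positive solution $u$ of the required regularity by standard linear parabolic theory, with positivity furnished by the parabolic maximum principle (or by the Feynman--Kac formula). Setting $J:=u^{-1/p}$ then yields a positive solution of \eqref{differentialeq}, and uniqueness transfers back: if $J_1,J_2$ both solved \eqref{differentialeq}, then $u_i:=J_i^{-p}$ would both solve the linear problem with the same initial datum, whence $u_1=u_2$ and $J_1=J_2$.

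The two-sided bound \eqref{boundsonJ} now splits into $u\ge1$ and $u\le(1-b)^{-(1+t/\beta)}$, equivalent to $J\le1$ and $J\ge j(t)$ respectively. The inequality $u\ge1$ is immediate: since $W\ge0$, the Feynman--Kac semigroup dominates the free one, so $u=\euler^{-t(\Delta-W)}\mathbf 1\ge\euler^{-t\Delta}\mathbf 1=\mathbf 1$ (equivalently, evaluate \eqref{differentialeq} at a spatial minimum of $J$). The upper bound on $u$ is the heart of the matter. Here I would use the Feynman--Kac representation
\[
u(x,t)=\mathbb{E}_x\!\left[\exp\!\Big(\int_0^t W(B_s)\,\drm s\Big)\right],
\]
where $B$ is the diffusion generated by $-\Delta$ (so its semigroup is $P_t$), and control the exponential moment by a Khasminskii-type iteration: expanding the exponential and ordering the time variables, each factor is estimated by $\sup_x\int_0^\tau(P_s\rho_-)(x)\,\drm s\le b_{\mathrm{Kato}}(\tau,\rho_-)\le b$ for $\tau\le\beta$, and the Markov property collapses the ordered integrals into a geometric series. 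The Kato smallness assumption is exactly the condition making this series converge on an interval of length $\beta$; the factor $(1+t/\beta)$ in \eqref{boundsonjexplicit} then arises by patching the estimate over consecutive intervals of length $\beta$ via the semigroup (Markov) property.

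The main obstacle is this last upper bound, and within it the accounting of constants. Since the relevant potential is $W=2p\,\rho_-$, its Kato quantity is $2p\,b$, so a naive Khasminskii iteration closes only for $2p\,b<1$ and produces the base $(1-2p\,b)$; obtaining convergence under the stated weaker threshold $b<(5\delta^{-1}-1)^{-1}$ and the sharper base $(1-b)$ demanded by \eqref{boundsonjexplicit} is the delicate point, and is where one must exploit the precise structure rather than settle for a bound of merely the same shape. Verifying that the resulting estimate is uniform in $x$ and correctly handles $t>\beta$ by iteration completes the argument; by comparison the lower bound $u\ge1$, the existence and uniqueness theory, and the algebra of the substitution are routine.
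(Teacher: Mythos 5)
Your proposal follows the paper's own route almost step for step. The paper sets $a:=5\delta^{-1}$ and $w:=J^{-(a-1)}$, which is exactly your substitution $u=J^{-p}$, and transforms \eqref{differentialeq} into the linear problem \eqref{transformationeq}, i.e. $\partial_t w=-\Delta w+2(a-1)\rho_-w$ with $w(\cdot,0)=1$. From there it argues as you do: well-definedness and $L^p$--$L^q$ boundedness of the Schr\"odinger semigroup $\euler^{-t(\Delta-2(a-1)\rho_-)}$ are quoted from Voigt's theory of absorption semigroups; the bound $w\geq 1$ (equivalently $J\leq 1$) follows from the Trotter product formula together with stochastic completeness, which is the same semigroup-domination argument you give; and the upper bound on $\Vert w\Vert_\infty$ is obtained by citing Proposition 5.3 of \cite{RoseStollmann-15} --- an $L^1\to L^1$ bound for Kato-class Schr\"odinger semigroups, proved there by essentially the Duhamel/Khasminskii iteration you sketch --- combined with self-adjointness/duality to convert it into an $L^\infty\to L^\infty$ bound. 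So your plan is in substance a self-contained rerun of the paper's proof, with the cited proposition replaced by its proof.

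The gap you flagged, however, is genuine, and you should know that the paper does not close it either: it is located precisely at the citation. The potential in the linear equation is $2(a-1)\rho_-$, whose Kato constant is $b_{\rm{Kato}}(\beta,2(a-1)\rho_-)=2(a-1)\,b$, yet the paper quotes the conclusion of the cited proposition with $b=b_{\rm{Kato}}(\beta,\rho_-)$ in it. For the values of $\delta$ relevant to Theorem \ref{gradientbound} one has $\delta\leq 2/(n+1)\leq 2/3$, hence $2(a-1)=2(5-\delta)/\delta\geq 13$, so this is not a harmless monotonicity step. Applying the proposition correctly, the method yields the conclusion under the hypothesis $2(a-1)b<1$, i.e. $b<\delta/(2(5-\delta))$, with
\begin{align*}
j(t)=\bigl(1-2(a-1)b\bigr)^{\left(1+\frac t\beta\right)\frac{\delta}{5-\delta}}\text,
\end{align*}
and the constants in the Harnack inequality, the heat kernel bound and the Betti number bound change accordingly. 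So your suspicion that one must "exploit the precise structure" to reach the stated threshold $b<(5\delta^{-1}-1)^{-1}$ and base $1-b$ points at a real defect: by this method --- which is the paper's method --- those constants are not attainable, and the honest conclusion is that the statement's hypothesis and the formula \eqref{boundsonjexplicit} should be corrected as above rather than that your iteration is too crude.
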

\begin{proof}
Let $a:=5\delta^{-1}$ and $$w=J^{-(a-1)}\text.$$ An easy calculation shows that $w$ satisfies
\begin{align}\label{transformationeq}
\begin{cases}
-\Delta w-\partial_t w +2(a-1)\rho_-w=0 & \text{on $M\times (0,\infty)$,}\\
w(\cdot,0)=1\text.&
\end{cases}
\end{align}
Since $\rho_-$ is continuous, it is relatively bounded with respect to $\Delta$, such that the semigroup $\euler^{-t(\Delta-2(a-1)\rho_-)}$ is continuous from $L^p(M)$ to $L^q(M)$ for all $1\leq p\leq q\leq \infty$ for all $t>0$, see \cite{Voigt-86}.
Therefore, the solution of \eqref{transformationeq} is given by
$$w=\euler^{-t(\Delta-2(a-1)\rho_-)}1\text.$$
The Trotter product formula and stochastic completeness imply $w\geq 1$. 
For the upper bound on $w$ we use Proposition 5.3 from \cite{RoseStollmann-15}, giving 
$$\Vert\euler^{-t(\Delta-2(a-1)\rho_-)}\Vert_{1,1}\leq \frac 1{1-b}\euler^{t\frac 1\beta \log\frac 1{1-b}}\text.$$
By duality, we have
\begin{align}
\Vert w\Vert_\infty\leq \Vert\euler^{-t(\Delta-2(a-1)\rho_-)}\Vert_{\infty,\infty}=\Vert\euler^{-t(\Delta-2(a-1)\rho_-)}\Vert_{1,1}\text.
\end{align}
The definition of $w$ now implies the stated bounds of the function $J$.
\end{proof}
\begin{proof}[Proof of Theorem \ref{gradientbound}]
We follow the steps in \cite{ZhangZhu-15}. Let $J=J(x,t)$ be a smooth positive function and 
$$Q(x,t)=\alpha J\frac{\vert\nabla u\vert^2}{u^2}-\frac{\partial_t u}{u}$$ 
and derive the inequality
\begin{align}\label{firstcomputation}
(-\Delta-\partial_t)(tQ)+2\frac{\nabla u}{u}\nabla (tQ)
&\geq \alpha t\frac{2-\delta}n J(\vert\nabla f\vert^2-\partial_tf)^2 \nonumber \\
&\quad+\alpha\left[-\Delta J-2\rho_-J-5\delta^{-1}\frac{\vert\nabla J\vert^2}{J}-\partial_tJ\right]t\vert \nabla f\vert^2\nonumber\\ 
&\quad-\delta\alpha tJ\vert \nabla f\vert^4-Q\text,
\end{align}
where $f=\ln u$. Propostion \ref{controlfunction} gives 
\begin{align}\label{firstcomputation}
(-\Delta-\partial_t)(tQ)+2\frac{\nabla u}{u}\nabla (tQ)
\geq \alpha t\frac{2-\delta}n J(\vert\nabla f\vert^2-\partial_tf)^2 
-\delta\alpha tJ\vert \nabla f\vert^4-Q\text,
\end{align}
For $T>0$, let $(x_0,t_0)$ be a maximum point of $tQ$ in $M\times [0,T]$. 
At this point the above inequality implies 
\begin{align*}
0\geq \alpha t(2J-\delta J)\frac 1n \left(\vert \nabla f\vert^2-\partial_t f\right)^2-\delta\alpha tJ\vert\nabla f\vert^4-Q\text.
\end{align*}
W.~l.~o.~g. $Q(x_0,t_0)\geq 0$. Then
\begin{align*}
\left(\vert\nabla f\vert^2-\partial f\right)^2\geq \left(\alpha J\frac{\vert\nabla u\vert^2}{u^2}-\frac{\partial_t u}{u}\right)^2+ (1-\alpha J)^2\vert\nabla f\vert^4\text. 
\end{align*}
Plugging this into the previous inequality gives 
\begin{align*}
0\geq \alpha \frac{2-\delta}{n}JtQ^2+\left(\frac{2-\delta}{n}(1-\alpha J)^2-\delta\right)\alpha tJ\vert\nabla f\vert^4-Q\text.
\end{align*}
The choice of $\delta$ implies 
\[
\frac{2-\delta}n(1-\alpha)^2-\delta=0
\]
and $J\leq 1$ gives 
\[
\frac{2-\delta}n(1-\alpha J)^2-\delta\geq 0 \quad\text{on}M\times [0,\infty)\text.
\]
At $(x_0,t_0)$ we have 
\[
0\geq \alpha \frac{2-\delta}nJt^2Q^2-tQ
\]
and therefore
\[
tQ\leq tQ\big|_{(x_0,t_0)}\leq \frac n{(2-\delta)\alpha j(t)},
\]
what implies the assertion.
\end{proof}
\section{Harnack inequality and heat kernel bounds}
Following the standard argument in \cite{LiYau-86} we obtain Harnack inequalities and heat kernel bounds. The geodesic distance on $M$ will be denoted by $d(\cdot,\cdot)$.
\begin{theorem}\label{theoremharnack}
Let $M$ be a closed Riemannian manifold of dimension $n\in\NN$, $n\geq 2$. For 
$\alpha\in(0,1)$ let $\beta>0$ and $\delta>0$ as in Theorem \ref{gradientbound} such that $$b:=b_{\rm{Kato}}(\beta,\rho_-)<\frac{\delta}{5-\delta}\text.$$ For any $T>0$, any positive solution of \eqref{heatequation} satisfies for all $x,y\in M$ and $0<t_1<t_2\leq T$
\begin{align}\label{Harnack1}
u(x,t_1)\leq u(y,t_2)\left(\frac{t_2}{t_1}\right)^{\frac n{(2-\delta)\alpha}\left(\frac 1{1-b}\right)^{\left(1+\frac {T}\beta\right)\frac\delta{5-\delta}}}\exp\left(\left(\frac 1{1-b}\right)^{\left(1+\frac {T}\beta\right)\frac\delta{5-\delta}}\frac{d(x,y)^2}{4(t_2-t_1)\alpha}\right)\text.
\end{align}
%
%
\end{theorem}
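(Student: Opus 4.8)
The plan is to derive the Harnack inequality from the pointwise gradient estimate \eqref{gradientestimate} by integrating along a path in spacetime, which is exactly the mechanism Li and Yau use in \cite{LiYau-86}. The essential observation is that Theorem \ref{gradientbound} controls the quantity $\alpha\, j(t)\frac{\vert\nabla u\vert^2}{u^2}-\frac{\partial_t u}{u}$ from above, and this combination is precisely what appears when one differentiates $\ln u$ along a curve. Since $j(t)$ is increasing in $t$ and we work on a finite time interval $(0,T]$, I would first replace $j(t)$ by its smallest value on the interval to obtain a uniform constant; because $j(t)=(1-b)^{(1+t/\beta)\frac{\delta}{5-\delta}}$ is decreasing in $t$ (as $0<1-b<1$), its minimum on $[0,T]$ is $j(T)=(1-b)^{(1+T/\beta)\frac{\delta}{5-\delta}}$. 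Setting $J_0:=j(T)^{-1}=\left(\frac{1}{1-b}\right)^{(1+T/\beta)\frac{\delta}{5-\delta}}$, the estimate \eqref{gradientestimate} gives, after dividing by $\alpha j(t)\geq \alpha j(T)$, the cleaner bound
\begin{align*}
\frac{\vert\nabla u\vert^2}{u^2}-J_0\frac{\partial_t u}{u}\leq \frac{n J_0^2}{(2-\delta)\alpha^2}\frac 1t\text.
\end{align*}

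Next I would fix $x,y\in M$ and $0<t_1<t_2\leq T$, choose a minimizing geodesic $\gamma\colon[0,1]\to M$ from $y$ to $x$ of length $d(x,y)$, and define the spacetime path $\eta(s)=(\gamma(s),(1-s)t_2+s t_1)$ interpolating from $(y,t_2)$ to $(x,t_1)$. Writing $f=\ln u$ and computing $\frac{d}{ds}f(\eta(s))=\nabla f\cdot\dot\gamma+(t_1-t_2)\partial_t f$, I would bound this derivative using the gradient estimate together with the elementary Cauchy--Schwarz / Young inequality $\nabla f\cdot\dot\gamma\leq J_0^{-1}\frac{\vert\dot\gamma\vert^2}{4}+J_0\vert\nabla f\vert^2/\ldots$ — more precisely one splits off exactly enough of $\vert\nabla f\vert^2$ to complete the combination $\vert\nabla f\vert^2-J_0\partial_t f$ that the gradient estimate controls. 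Integrating $\frac{d}{ds}f(\eta(s))$ over $s\in[0,1]$ yields $f(x,t_1)-f(y,t_2)$ on the left, and on the right a sum of two contributions: a term proportional to $\int_{t_1}^{t_2}\frac{dt}{t}=\log\frac{t_2}{t_1}$ coming from the $\frac nt$ factor, and a term proportional to $\frac{d(x,y)^2}{t_2-t_1}$ coming from the length of the geodesic against the time increment. Exponentiating reproduces \eqref{Harnack1}.

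The delicate bookkeeping is the optimization of the splitting constant in Young's inequality and the tracking of the prefactor $J_0$ through both terms, so that the $\log(t_2/t_1)$ coefficient comes out as $\frac{n}{(2-\delta)\alpha}J_0$ and the exponential coefficient as $\frac{J_0}{4\alpha}$, matching \eqref{Harnack1} exactly. I would parametrize the path with a time-dependent speed if necessary, or equivalently use the standard Li--Yau substitution that balances the space and time contributions; the constant $J_0^2$ appearing in the differential estimate gets reduced to a single power $J_0$ in each term after the integration, which is the arithmetic one must verify carefully. The main obstacle is precisely this constant-chasing: ensuring that the uniform replacement $j(t)\mapsto j(T)$ and the completion-of-square against $\vert\dot\gamma\vert^2/4$ are compatible and produce the stated exponents, rather than any genuine analytic difficulty, since all the hard work — the existence of $J$ and the gradient bound itself — is already contained in Theorem \ref{gradientbound} and Proposition \ref{controlfunction}.
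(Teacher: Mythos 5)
Your overall route is exactly the paper's: integrate $\frac{\drm}{\drm s}\log u$ along the spacetime path $\eta(s)=(\gamma(s),(1-s)t_2+st_1)$, insert the bound from Theorem \ref{gradientbound} for $-(\log u)_t$, maximize the resulting quadratic in $\vert\nabla\log u\vert$, and use $1/j(t)\leq J_0:=\left(\frac 1{1-b}\right)^{\left(1+\frac T\beta\right)\frac\delta{5-\delta}}$ together with $\int_0^1\frac{t_2-t_1}{(1-s)t_2+st_1}\,\drm s=\log\frac{t_2}{t_1}$, then exponentiate. However, your preliminary ``cleaner bound''
\begin{align*}
\frac{\vert\nabla u\vert^2}{u^2}-J_0\frac{\partial_t u}{u}\leq \frac{n J_0^2}{(2-\delta)\alpha^2}\frac 1t
\end{align*}
is not a consequence of \eqref{gradientestimate}. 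Dividing \eqref{gradientestimate} by $\alpha j(t)$ puts the coefficient $\frac 1{\alpha j(t)}$ in front of $-\frac{\partial_t u}{u}$, and replacing that time-dependent coefficient by the constant $J_0$ is only legitimate if one knows the sign of $\partial_t u$, which one does not; moreover the two numbers $\frac 1{\alpha j(t)}$ and $J_0=\frac 1{j(T)}$ are not even ordered in a fixed way, since their comparison amounts to $\alpha$ versus $j(T)/j(t)$, which can go either way depending on the parameters.

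Even if that inequality were granted, it yields the wrong constants: running the path argument with $-\partial_t f\leq -\frac 1{J_0}\vert\nabla f\vert^2+\frac{nJ_0}{(2-\delta)\alpha^2}\frac 1t$ produces the coefficient $\frac{n}{(2-\delta)\alpha^2}J_0$ on $\log\frac{t_2}{t_1}$ and $\frac{J_0}{4}$ on $\frac{d(x,y)^2}{t_2-t_1}$, rather than the $\frac{n}{(2-\delta)\alpha}J_0$ and $\frac{J_0}{4\alpha}$ appearing in \eqref{Harnack1}; so the ``constant chasing'' you deferred does not in fact close in this normalization. The repair is simple and is what the paper does: do not normalize at all. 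Keep \eqref{gradientestimate} as it stands, so that along the path
\begin{align*}
\frac{\drm}{\drm s}\log u\leq \vert\dot\gamma\vert\,\vert\nabla\log u\vert-(t_2-t_1)\,\alpha j(t)\vert\nabla \log u\vert^2+(t_2-t_1)\frac{n}{(2-\delta)\alpha j(t)t}\text{,}
\end{align*}
maximize the quadratic in $\vert\nabla\log u\vert$ to get $\frac{\vert\dot\gamma\vert^2}{4(t_2-t_1)\alpha j(t)}$, and only afterwards bound $1/j(t)\leq J_0$ in both terms. Equivalently, if you insist on a uniform preliminary bound, relax the two occurrences of $j(t)$ separately and with the correct monotonicity --- $\alpha j(t)\geq\alpha j(T)$ in the (negative) gradient term, $1/j(t)\leq J_0$ in the error term --- which corresponds to the coefficient $\frac{J_0}{\alpha}$, not $J_0$, in front of $\frac{\partial_t u}{u}$ in your normalized inequality; with $-\partial_t f\leq-\frac\alpha{J_0}\vert\nabla f\vert^2+\frac{nJ_0}{(2-\delta)\alpha}\frac 1t$ the integration gives exactly the exponents of \eqref{Harnack1}. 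With that correction your argument is the paper's proof.
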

\begin{proof}
We follow  the proof of Theorem 2.1 of \cite{LiYau-86}. Let $\gamma\colon[0,1]\to M$ be any curve with $\gamma(0)=y$ and $\gamma(1)=x$ and define 
$$\eta\colon[0,1]\to M\times[t_1,t_2],\quad s\mapsto (\gamma(s),(1-s)t_2+st_1)\text.$$
Integrating $\frac d{ds}\log u$ along $\eta$, we get
\begin{align}
\log u(x,t_1)-\log u(y,t_2)=\int_0^1\langle\dot\gamma,\nabla(\log u)\rangle-(t_2-t_1)(\log u)_t\drm s\text.
\end{align}
Applying Theorem \ref{gradientbound} to $-(\log u)_t$ yields
\begin{align}\label{pathintegral}
\log\frac{u(x,t_1)}{u(y,t_2)}\leq \int_0^1\vert \dot\gamma\vert\vert\nabla\log u\vert-(t_2-t_1)\alpha j(t)\vert\nabla\log u\vert^2+(t_2-t_1)\frac n{(2-\delta)\alpha j(t)t}\drm s\text.
\end{align}
Viewing $\vert \nabla\log u\vert$ as a variable and the integrand as a quadratic in it, we observe that the integrand can be bounded by 
$$\frac{\vert\dot\gamma\vert^2}{4(t_2-t_1)\alpha j(t)}\text.$$
By the definition of $t=(1-s)t_2+st_1$
\begin{align}\label{maximumj}
\max_{s\in[0,1]}\frac 1{j(t)}\leq \max_{s\in[0,1]} \left(\frac 1{1-b}\right)^{\left(1+\frac t\beta\right)\frac\delta{5-\delta}}\leq \left(\frac 1{1-b}\right)^{\left(1+\frac {T}\beta\right)\frac\delta{5-\delta}}\text.
\end{align}
Furthermore, we have
\begin{align*}
\int_0^1\frac{t_2-t_1}{(1-s)t_2+st_1}\drm s=\log\frac{t_2}{t_1}\text.
\end{align*}
Using \eqref{maximumj} we can turn \eqref{pathintegral} into
\begin{align*}
\log\frac{u(x,t_1)}{u(y,t_2)}&\leq\left(\frac 1{1-b}\right)^{\left(1+\frac {T}\beta\right)\frac\delta{5-\delta}}\left\{\frac 1{4(t_2-t_1)\alpha}\int_0^1\vert\dot\gamma\vert^2\drm s
+\frac n{(2-\delta)\alpha}\log\frac{t_2}{t_1}\right\}\\
\end{align*}
Taking exponentials yields the claim.
\end{proof}
%
%
%
\begin{theorem}
Let $M$ be a closed Riemannian manifold of dimension $n\in\NN$, $n\geq 2$. Assume that there are constants $\alpha\in(0,1)$, $\beta>0$ and $\delta>0$ as in Theorem \ref{gradientbound} such that $$b:=b_{\rm{Kato}}(\beta,\rho_-)<\frac{\delta}{5-\delta}\text.$$
There exists an explicit $C_1=C_1(n,\alpha,\delta,\beta,b,\diam M)>0$ such that for any $t\in(0,\beta/2)$ and $x\in M$ the heat kernel $p_t(\cdot,\cdot)$ of $M$ is bounded from above by
\begin{align}\label{upperboundondiag}
p_t(x,x)\leq \frac{C_1}{\Vol(M)}t^{-\frac n{(2-\delta)\alpha}(1-b)^{-\frac 12}}\text.
\end{align}
In particular, such that for all $x,y\in M$ and $t\in (0,\beta/2)$
\begin{align}\label{upperboundoffdiag}
p_t(x,y)\leq \frac{C_1}{\Vol(M)}t^{-\frac n{(2-\delta)\alpha}(1-b)^{-\frac 12}}\exp\left(\frac{\diam(M)^2}t-\frac{d(x,y)^2}{4t}\right)\text.
\end{align}
\end{theorem}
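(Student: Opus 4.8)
The plan is to derive both inequalities from the parabolic Harnack inequality of Theorem~\ref{theoremharnack} applied to the heat kernel, the key device being a \emph{fixed} reference time that prevents an exponential factor from appearing on the diagonal. Throughout I would fix the free parameter $T=\beta$, so that for every $x\in M$ the positive solution $(z,s)\mapsto p_s(x,z)$ of \eqref{heatequation} satisfies, for all $0<s_1<s_2\leq\beta$ and all $z_1,z_2\in M$,
\begin{align*}
p_{s_1}(x,z_1)\leq p_{s_2}(x,z_2)\left(\frac{s_2}{s_1}\right)^{A}\exp\left(B\,\frac{d(z_1,z_2)^2}{4(s_2-s_1)\alpha}\right),
\end{align*}
where $A=\frac{n}{(2-\delta)\alpha}(1-b)^{-\frac{2\delta}{5-\delta}}$ and $B=(1-b)^{-\frac{2\delta}{5-\delta}}$. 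Since $\delta=\frac{2(1-\alpha)^2}{n+(1-\alpha)^2}<\frac2n\leq1$, one has $\frac{2\delta}{5-\delta}\leq\frac12$, and because $0<1-b<1$ the function $x\mapsto(1-b)^{-x}$ is increasing; hence $A\leq N:=\frac{n}{(2-\delta)\alpha}(1-b)^{-1/2}$ and $B\leq(1-b)^{-1/2}$.

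First I would bound the kernel at the reference time $s=\beta/2$. Applying the displayed inequality with $z_1=x$, $s_1=\beta/2$, $z_2=z$, $s_2=\beta$ and using $d(x,z)\leq\diam(M)$ gives $p_{\beta/2}(x,x)\leq p_{\beta}(x,z)\,2^{A}\exp\!\big(B\,\diam(M)^2/(2\beta\alpha)\big)$; integrating in $z$ over $M$ and invoking stochastic completeness $\int_M p_\beta(x,z)\,\drm z=1$ together with the symmetry of the kernel yields
\begin{align*}
p_{\beta/2}(x,x)\leq\frac{C_0}{\Vol(M)},\qquad C_0:=2^{A}\exp\left(B\,\frac{\diam(M)^2}{2\beta\alpha}\right).
\end{align*}
The crucial observation is that at this fixed time the unavoidable exponential has become a constant; integrating the Harnack inequality at a variable time $t$ would instead produce a factor $\exp(\mathrm{const}/t)$, and this is precisely the obstruction to a purely polynomial diagonal bound, hence the one step that must be handled with care.

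Next I would propagate to small times. For $t\in(0,\beta/2)$ the Harnack inequality with equal spatial points $z_1=z_2=x$ and times $s_1=t<s_2=\beta/2$ carries \emph{no} exponential factor and gives $p_t(x,x)\leq p_{\beta/2}(x,x)\,(\beta/(2t))^{A}$. Combining with the reference bound and then replacing the exponent $A$ by $N$ at the cost of the constant factor $(\beta/2)^{N-A}$ (legitimate since $N-A\geq0$ and $t\leq\beta/2$) produces
\begin{align*}
p_t(x,x)\leq\frac{C_1}{\Vol(M)}\,t^{-N},\qquad N=\frac{n}{(2-\delta)\alpha}(1-b)^{-1/2},
\end{align*}
with $C_1=C_1(n,\alpha,\delta,\beta,b,\diam M)$ collecting $C_0$, $(\beta/2)^{A}$ and the absorption factor. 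This is exactly \eqref{upperboundondiag}.

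Finally, the off-diagonal bound \eqref{upperboundoffdiag} would follow ``in particular''. The semigroup property and symmetry give $p_t(x,y)=\int_M p_{t/2}(x,z)\,p_{t/2}(y,z)\,\drm z$, so by Cauchy--Schwarz $p_t(x,y)\leq\sqrt{p_t(x,x)\,p_t(y,y)}\leq\frac{C_1}{\Vol(M)}t^{-N}$ by the diagonal estimate applied at $x$ and at $y$. Since $d(x,y)\leq\diam(M)$ we have $\frac{\diam(M)^2}{t}-\frac{d(x,y)^2}{4t}\geq\frac{3\diam(M)^2}{4t}\geq0$, whence $\exp\!\big(\frac{\diam(M)^2}{t}-\frac{d(x,y)^2}{4t}\big)\geq1$, and multiplying the right-hand side by this factor only enlarges it, giving \eqref{upperboundoffdiag}. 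The only genuine difficulty is the diagonal estimate, resolved by the fixed-reference-time device; everything else is the standard Li--Yau bookkeeping.
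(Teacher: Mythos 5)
Your proposal is correct, and it splits into two halves relative to the paper. For the on-diagonal bound \eqref{upperboundondiag} you follow essentially the paper's route: apply the Harnack inequality of Theorem \ref{theoremharnack} to $u(\cdot,s)=p_s(x,\cdot)$, integrate over the manifold, and use stochastic completeness, with the time gap kept of order $\beta$ so that the exponential term is a constant. The paper does this in a single Harnack application from $t$ to $t+s$ with $s=(\beta-t)/2$, while you interpose the fixed reference time $\beta/2$ and then descend from $\beta/2$ to $t$ at the same spatial point (where the Gaussian factor is exactly $1$); the two bookkeepings are equivalent, and your exponent comparison $A\leq N$ via $\tfrac{2\delta}{5-\delta}\leq\tfrac12$ matches the paper's observation that $\tfrac{\delta}{5-\delta}\leq\tfrac14$. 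For the off-diagonal bound \eqref{upperboundoffdiag} you genuinely diverge: the paper invokes the on-diagonal-to-Gaussian self-improvement theorem (Theorem 15.13 in Grigor'yan's book, with $D=2$), whereas you use the elementary identity $p_t(x,y)=\int_M p_{t/2}(x,z)p_{t/2}(z,y)\,\dvol(z)$, Cauchy--Schwarz, and the observation that $d(x,y)\leq\diam(M)$ forces $\exp\bigl(\tfrac{\diam(M)^2}{t}-\tfrac{d(x,y)^2}{4t}\bigr)\geq 1$, so the stated off-diagonal inequality is in fact a weakening of the on-diagonal one. Your argument is more self-contained and exposes that, as formulated, the ``Gaussian'' factor in \eqref{upperboundoffdiag} carries no decay; the paper's citation route is heavier machinery but is the one that would yield a genuinely decaying Gaussian term if the constant were tracked in a sharper form.
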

\begin{proof}
Fix $T\leq \beta$.
Notice that $$\frac \delta{5-\delta}\leq \frac 14$$ and therefore $$\left(\frac 1{1-b}\right)^{\left(1+\frac {T}\beta\right)\frac\delta{5-\delta}}\leq \left(\frac 1{1-b}\right)^{\frac 12}\text.$$
Now let $x\in M$, $t<T$ and $s>0$ such that $t+s\leq T$. Choose $u(\cdot, t)=p_t(x,\cdot)$. Inserting the two estimates above in inequality \eqref{Harnack1} implies for all $y\in M$
\begin{align}\label{upperbound1}
p_t(x,x)\leq p_{t+s}(x,y)\left(\frac{t+s}{t}\right)^{\frac n{(2-\delta)\alpha}\left(\frac 1{1-b}\right)^{\frac 12}}\exp\left(\left(\frac 1{1-b}\right)^{\frac 12}\frac{d(x,y)^2}{4s\alpha}\right)\text.
\end{align}
Integrating \eqref{upperbound1} with respect to the $y$-variable, using $\int_Mp_t(x,y)\dvol(y)= 1$ for all $x\in M$ and all $t>0$ and $t+s\leq \beta$ yields
\begin{align}\label{upperbound2}
\Vol(M)p_t(x,x)\leq\left(\frac{\beta}{t}\right)^{\frac n{(2-\delta)\alpha}\left(\frac 1{1-b}\right)^{\frac 12}}\exp\left(\left(\frac 1{1-b}\right)^{\frac 12}\frac{\diam(M)^2}{4s\alpha}\right)\text.
\end{align}
To eliminate the parameter $s$ on the right hand side, sjnce $s\in(0,\beta-t]$ we can choose $s:= (\beta-t)/2$ such that 
\begin{align}\label{upperbound3}
\Vol(M)p_t(x,x)\leq\left(\frac{\beta}{t}\right)^{\frac n{(2-\delta)\alpha}\left(\frac 1{1-b}\right)^{\frac 12}}\exp\left(\left(\frac 1{1-b}\right)^{\frac 12}\frac{\diam(M)^2}{2(\beta-t)\alpha}\right)\text.
\end{align}
By assumption $t\leq \beta/2$, yielding the statement about the on-diagonal bound with 
\[
C_1:=\beta^{\frac n{(2-\delta)\alpha}\left(\frac 1{1-b}\right)^{\frac 12}}\exp\left(\left(\frac 1{1-b}\right)^{\frac 12}\frac{\diam(M)^2}{2(\beta-t)\alpha}\right)\text.
\] 
The on-diagonal bound self-improves directly to the off-diagonal bound by Theorem 15.13 in \cite{Grigoryan-09} setting $D=2$ in the notation of the cited book.
\end{proof}
\section{Bounds on $b_1(M)$}
Peter Stollmann and the author showed in \cite{RoseStollmann-15} that controlling the first Betti number $b_1(M)$ is closely related to semigroup perturbations by Kato-type potentials. It turns out that the only quantity which has to be controlled is the operator norm $\Vert\euler^{-t(\Delta+\rho)}\Vert_{1,\infty}$, where $\Vert\cdot\Vert_{p,q}$ denotes the operator norm form $L^p(M)$ to $L^q(M)$. The knowledge about the behavior of the unperturbed semigroup $\euler^{-t\Delta}$ is in fact sufficient to control the semigroup perturbed by a Kato-potential. Since we proved that the Kato-condition on the negative part of Ricci curvature controls the unperturbed semigroup, this condition is optimal. That means that from the perturbation theoretic point of view, nothing better can be expected.

\begin{corollary}
Let $M$ be a compact Riemannian manifold of dimension $n\geq 3$ and let $\alpha, \delta,\beta$ and $b$ as in Theorem \ref{gradientbound}. Then there is an explicit constant $B$ depending only on these quantities such that
\begin{align}
b_1(M)\leq B\text.
\end{align}
\end{corollary}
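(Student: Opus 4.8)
The plan is to route the estimate through the classical Bochner method in the form already isolated in \cite{RoseStollmann-15}, with the role of a pointwise lower Ricci bound now played by the semigroup control from the previous sections. First I would use Hodge theory to identify $b_1(M)=\dim\mathcal H^1(M)$, the dimension of the space of harmonic $1$-forms. For a harmonic $1$-form $\omega$ the Bochner--Weitzenböck formula gives $\nabla^*\nabla\omega+\Ric(\omega)=0$, and since $\Ric\geq\rho\geq-\rho_-$ pointwise, the refined Kato inequality shows that $|\omega|$ is a weak subsolution of the Schrödinger operator $\Delta-\rho_-$, i.e. $\Delta|\omega|\leq\rho_-|\omega|$. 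Equivalently, at the level of semigroups, the Hodge heat semigroup $\euler^{-t\Delta_1}$ on $1$-forms is dominated by the scalar Schrödinger semigroup $\euler^{-t(\Delta-\rho_-)}$ (Hess--Schrader--Uhlenbrock domination), so that the integral kernel $K_t^{(1)}$ of $\euler^{-t\Delta_1}$ obeys $\Vert K_t^{(1)}(x,y)\Vert_{\mathrm{op}}\leq k_t(x,y)$, where $k_t$ denotes the kernel of $\euler^{-t(\Delta-\rho_-)}$.

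With the domination in hand I would bound the Betti number by a trace. Since harmonic forms lie in $\ker\Delta_1$ and all eigenvalues of $\Delta_1$ are non-negative,
\begin{align*}
b_1(M)=\dim\ker\Delta_1\leq\Tr\euler^{-t\Delta_1}=\int_M\Tr_{\Lambda^1_x}K_t^{(1)}(x,x)\,\dvol(x)\leq n\int_M k_t(x,x)\,\dvol(x)\text,
\end{align*}
using $\dim\Lambda^1_x=n$ in the last step. The right-hand side equals $n\,\Tr\euler^{-t(\Delta-\rho_-)}\leq n\,\Vol(M)\,\Vert\euler^{-t(\Delta-\rho_-)}\Vert_{1,\infty}$, so everything reduces to controlling the perturbed ultracontractive norm $\Vert\euler^{-t(\Delta-\rho_-)}\Vert_{1,\infty}$ — precisely the quantity singled out in \cite{RoseStollmann-15}. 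This I would estimate by splitting $\euler^{-t(\Delta-\rho_-)}=\euler^{-\frac t2(\Delta-\rho_-)}\euler^{-\frac t2(\Delta-\rho_-)}$ and combining the unperturbed ultracontractivity coming from the on-diagonal bound \eqref{upperboundondiag} with the Kato perturbation estimate already used in Proposition \ref{controlfunction} (Proposition 5.3 of \cite{RoseStollmann-15}), which under $b<\delta/(5-\delta)$ supplies a factor of the form $(1-b)^{-1}\euler^{t\beta^{-1}\log(1-b)^{-1}}$ and, in particular, a lower bound on the bottom of the spectrum of $\Delta-\rho_-$.

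Finally I would fix a convenient time, e.g. $t=\beta/4$ so that the half-time $t/2$ lies in the range $(0,\beta/2)$ where \eqref{upperboundondiag} applies, and collect the resulting factors into a single constant $B$. The hard part is not the qualitative chain above but the quantitative bookkeeping: one has to carry the $\Vol(M)$ from the trace bound against the $\Vol(M)^{-1}$ in \eqref{upperboundondiag} and verify that the remaining geometric data organise themselves so that $B$ depends only on the stated parameters. This is exactly the step for which the reduction of \cite{RoseStollmann-15} is invoked, since a naive trace estimate retains scale-dependent factors; the dimensional restriction $n\geq3$ is likewise inherited from that reduction. A secondary technical point, standard but worth stating explicitly, is the justification of the Kato inequality and of the Hess--Schrader--Uhlenbrock domination for a merely continuous, possibly sign-changing $\rho$, which is available because $\rho_-$ is relatively bounded with respect to $\Delta$ as already noted in the proof of Proposition \ref{controlfunction}.
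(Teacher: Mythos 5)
Your proposal is correct and follows essentially the same route as the paper: the paper's proof simply invokes Corollaries 5.7 and 5.4 of \cite{RoseStollmann-15}, whose content is exactly the chain you unpack — Hodge theory, Bochner/Kato/Hess--Schrader--Uhlenbrock domination giving $b_1(M)\leq n\Tr(\euler^{-t(\Delta+\rho)})\leq n\Vol(M)\Vert\euler^{-t(\Delta+\rho)}\Vert_{1,\infty}$, followed by control of the perturbed $(1,\infty)$-norm via the unperturbed heat kernel bound and the Kato condition. The only (inessential) deviations are that you dominate by $\euler^{-t(\Delta-\rho_-)}$ rather than $\euler^{-t(\Delta+\rho)}$, which costs only a slightly worse constant, and you evaluate at $t=\beta/4$ instead of $\beta/2$.
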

\begin{proof}
The proof of Corollary 5.7 in \cite{RoseStollmann-15} tells us 
\begin{align}
b_1(M)=\dim (H^1(M))\leq \Tr(\euler^{-t\Delta^1})\leq n\Tr(\euler^{-t(\Delta+\rho)})\quad \text{for } t>0\text.
\end{align}
Especially
$$\Tr(\euler^{-\beta/2(\Delta+\rho)})=\int_Mk(x,x)\dvol(x)$$
with a continuous kernel $k$ (note that $\rho$ is continuous) that can be estimated pointwise by
$$0\leq \sup_{x,y\in M}k(x,y)\leq \Vert\euler^{-\beta/2(\Delta+\rho)}\Vert_{1,\infty}\text,$$
giving 
$$b_1(M)\leq \Vol(M)\Vert\euler^{-\beta/2(\Delta+\rho)}\Vert_{1,\infty}\text.$$
Following the proof of Corollary 5.4 in \cite{RoseStollmann-15}, we see that 
\begin{align*}
\Vert\euler^{-\beta/2(\Delta+\rho)}\Vert_{1,\infty}&\leq \frac{C_1}{\Vol(M)}\left(\frac 2{1-b}\right)^{\frac 32\frac{1+b}{1-b}+\frac n{(2-\delta)\alpha}\left(\frac 1{1-b}\right)^{\frac 12}}\cdot\ldots\\
&\quad\ldots\cdot\left(\frac 2\beta\right)^{\frac n{(2-\delta)\alpha}\left(\frac 1{1-b}\right)^{\frac 12}}\exp\left(\frac{\beta\diam(M)^2(1-b)}{2(1+b)}\right)\text.
\end{align*}
\end{proof}
%
%
%
%
%
%
%

\begin{thebibliography}{Gal88}

\bibitem[Gal88]{Gallot-88}
Sylvestre Gallot.
\newblock Isoperimetric inequalities based on integral norms of {R}icci
  curvature.
\newblock {\em Ast\'erisque}, (157-158):191--216, 1988.
\newblock Colloque Paul L{\'e}vy sur les Processus Stochastiques (Palaiseau,
  1987).

\bibitem[LY86]{LiYau-86}
Peter Li and Shing-Tung Yau.
\newblock On the parabolic kernel of the {S}chr\"odinger operator.
\newblock {\em Acta Math.}, 156(3-4):153--201, 1986.

\bibitem[Ros]{Rose-16}
Christian Rose.
\newblock Heat kernel upper bound on {R}iemannian manifolds with locally
  uniform {R}icci curvature integral bounds.
\newblock {\em http://arxiv.org/abs/1601.07438}.

\bibitem[RS]{RoseStollmann-15}
Christian Rose and Peter Stollmann.
\newblock The {K}ato class on compact manifolds with integral bounds of {R}icci
  curvature.
\newblock {\em http://arxiv.org/abs/1601.07441}.

\bibitem[SC02]{SaloffCoste-02}
Laurent Saloff-Coste.
\newblock {\em Aspects of {S}obolev-type inequalities}, volume 289 of {\em
  London Mathematical Society Lecture Note Series}.
\newblock Cambridge University Press, Cambridge, 2002.

\bibitem[SV96]{StollmannVoigt-96}
Peter Stollmann and J{\"u}rgen Voigt.
\newblock Perturbation of {D}irichlet forms by measures.
\newblock {\em Potential Anal.}, 5(2):109--138, 1996.

\bibitem[Voi86]{Voigt-86}
Jrgen Voigt.
\newblock Absorption semigroups, their generators, and {S}chr\"odinger
  semigroups.
\newblock {\em J. Funct. Anal.}, 67(2):167--205, 1986.

\bibitem[ZZ]{ZhangZhu-15}
Q.~Zhang and M.~Zhu.
\newblock {L}i-{Y}au gradient bounds under nearly optimal curvature conditions.
\newblock {\em http://arxiv.org/pdf/1607.05951v1}.

\end{thebibliography}


\end{document}